\newtheorem{theorem}{Theorem}[section]
\newtheorem{lemma}[theorem]{Lemma}
\newtheorem{proposition}[theorem]{Proposition}
\newtheorem{cor}[theorem]{Corollary}  
\newtheorem{definition}[theorem]{Definition}
\theoremstyle{contraction}
\newtheorem{rem}[theorem]{Remark}
\begin{document}

\author{Philippe  Biane}
\thanks{This research was supported by the project CARPLO, ANR-20-CE40-0007.}
\email  {philippe.biane@univ-eiffel.fr }
\address{Institut Gaspard Monge
UMR CNRS - 8049
Universit\'e Gustave Eiffel
5 boulevard Descartes, 
77454 Champs-Sur-Marne
FRANCE}

\title{Combinatorics of descent algebras and graph coverings}

\begin{abstract}
We give a direct combinatorial proof that the product of two descent classes in a  symmetric  group is a sum of descent classes. The proof is based on the fact that the group product gives a covering map when descent classes are endowed with the graph structure coming from the weak order. The main geometric argument is valid for any Coxeter group, even infinite ones for which the descent algebra does not exist.
\end{abstract}
\maketitle
\section{Introduction}
Solomon \cite{S} has proved that the product of two descent classes, in the group algebra of a finite Coxeter group, is a sum of descent classes, therefore the linear combinations of descent classes form a subalgebra of the group algebra. The proof  however does not yield a direct combinatorial explanation. In this paper we give a simple  argument, involving covering graphs, which shows that the product of two descent classes is a sum of descent classes.  As an outcome we  give  a refinement of the structure coefficients of the descent algebra taking into account the structure of the covering graphs.  For the convenience of readers who are not versed in Coxeter groups we give the proofs for the symmetric groups first and then show how our arguments extend easily to the case of general Coxeter groups. In fact we will see that the geometric argument that we give is also valid  in infinite Coxeter groups, even when the descent algebra is not well defined.

This paper is organized as follows: in section 2 we recall the definition of descents and recoils of permutations.
In section 3 we give our main combinatorial result, Proposition \ref{recoil}, which is at the core of the proof of the existence of the descent algebra. In section 4 we recall some standard notions of graph theory concerning coverings of graphs. The existence of the descent algebra follows then easily, as shown in section 5. In section 6 we give some explicit examples of our construction and in section 7 we explore the action of the fundamental group induced by the covering. 
 Finally in section 8 we show that our main result extends in a simple way to   Coxeter groups, by using the exchange property. The existence of the descent algebra follows then by exactly the same reasoning as for the symmetric groups.  
\section{Descents and recoils}
\label{sec:3}

For a positive integer $n$ let $S_n$ be the symmetric group acting on $\{1,2,\ldots,n\}$.
A permutation $\sigma\in S_n$ with  linear representation  $\sigma_1\sigma_2\ldots\sigma_n$ has a {\sl descent} at $i$ if $\sigma_i>\sigma_{i+1}$.
It has a {\sl recoil} at $i$ if $\sigma^{-1}$ has a descent at $i$, this happens when $i+1$ appears before $i$ in  $\sigma_1\sigma_2\ldots\sigma_n$.
Let $D(\sigma)\subset \{1,2,\ldots,n-1\}$ (resp. $R(\sigma)=D(\sigma^{-1})$) denote the set of descents (resp. recoils) of a permutation.
In the following we will consider recoils rather than descents for clarity of exposition but   it is straightforward to convert our results  into statements about descents. 
For $I\subset \{1,\ldots,n-1\}$ denote by ${\mathcal Y}_I$ the recoil class of $I$, which is the set of all permutations $\sigma$ such that $R(\sigma)=I$ and let
$$Y_I=\sum_{\sigma\in {\mathcal Y}_I}\sigma$$
the sum being in the group algebra of $S_n$ (which we consider over the rationals but any field would do).
Solomon \cite{S} has proved that there exists nonnegative integers $a_{IJK}$, indexed by triples of  subsets of $\{1,\ldots,n-1\}$, such that
\begin{equation}\label{descentalg}Y_IY_J=\sum_{K}a_{IJK}Y_K.\end {equation}
It follows that the linear space generated by the $Y_I$ for $I\subset\{1,2,\ldots,n-1\} $ forms a subalgebra of ${\bf Q}S_n$ called the recoil algebra of the symmetric group.
The proof is indirect, it shows that the sums $$X_I=\sum_{J\subset I}Y_J$$ satisfy a similar formula for some coefficients $b$:
 $$X_IX_J=\sum_{K}b_{IJK}X_K$$
 then (\ref{descentalg}) follows since the $Y$ can be obtained from the $X$ by M\"obius inversion
 $$Y_I=\sum_{J\subset I}(-1)^{|I|-|J|}X_I.$$
 Since the sets ${\mathcal Y}_I$ are disjoint the coefficients $a_{IJK}$ are nonnegative integers.
 The combinatorial significance of the existence of the algebra however remains obscure.
 In this paper I want to explain 
 some of the combinatorics behind (\ref{descentalg}) using a simple property of products of permutations, Proposition \ref{recoil} below, and some notions of  graph theory.
\section{Descent and recoil classes}We denote by $s_i$ the simple transposition $(i\, i+1)$ so that $S=\{s_1,s_2,\ldots,s_{n-1}\}$ is  a generating subset  of $S_n$. The associated right Cayley graph  has $S_n$  for vertex set and an edge between $\pi$ and $\sigma$ if $\sigma=\pi s_i$ for some $i$. Since the $s_i$ are involutions the graph is non-oriented. The associated length function $\ell(\sigma)$ is the smallest number of factors in a decomposition $\sigma=s_{i_1}\ldots s_{i_k}$, and is also the number of inversions of the permutation $\sigma$. The distance function on the graph is $d(\pi,\sigma)=\ell(\pi^{-1}\sigma)$, it allows to define the  
right weak order $\leq_w$ on $S_n$:
one has $\pi\leq_w\sigma$ if $d(e,\pi)+d(\pi,\sigma)=d(e,\sigma)$ (here $e$ is the identity permutation). The permutation  $w_0(i)=n+1-i$ is the maximal element for the right weak order. The graph can be realized as the skeleton of a polyhedron of dimension $n-1$, the permutahedron.

 Note that $\sigma$ has a recoil at $i$ if and only if $\ell(s_i\sigma )=\ell(\sigma)-1$. 
We will endow the sets ${\mathcal Y}_I$ with the graph structure induced from the Cayley graph of $S_n$.
The following is well-known.
\begin{proposition}\label{recoilclass}
Let $I\subset \{1,\ldots,n-1\}$, 
the set ${\mathcal Y}_I$ has a minimal and a maximal element for the right weak order, called 
$\alpha_I$ and $\beta_I$, moreover
$${\mathcal Y}_I=[\alpha_I,\beta_I]=\{\sigma\in S_n\,|\,\alpha_I\leq_w\sigma\leq_w\beta_I\}.$$

\end{proposition}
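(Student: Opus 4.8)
The plan is to control how the recoil set $R(\sigma)$ changes along the edges of the right weak order. The first ingredient is a \emph{monotonicity} statement: $\pi\le_w\sigma$ implies $R(\pi)\subseteq R(\sigma)$. This is soft. If $i\in R(\pi)$, i.e.\ $\ell(s_i\pi)=\ell(\pi)-1$, then, using $\ell(\sigma)=\ell(\pi)+\ell(\pi^{-1}\sigma)$ (the definition of $\pi\le_w\sigma$) and subadditivity of the length,
$$\ell(s_i\sigma)=\ell\big((s_i\pi)(\pi^{-1}\sigma)\big)\le\ell(s_i\pi)+\ell(\pi^{-1}\sigma)=\ell(\sigma)-1;$$
since multiplication by $s_i$ changes the length by $\pm1$, this forces $\ell(s_i\sigma)=\ell(\sigma)-1$, that is $i\in R(\sigma)$. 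An immediate consequence is that $\mathcal Y_I$ is \emph{convex} for $\le_w$: if $\sigma,\sigma'\in\mathcal Y_I$ and $\sigma\le_w\tau\le_w\sigma'$, then $I=R(\sigma)\subseteq R(\tau)\subseteq R(\sigma')=I$, so $\tau\in\mathcal Y_I$.

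Next I would examine a single cover $\sigma\lessdot_w\sigma s_i$, so $\sigma_i<\sigma_{i+1}$ in one-line notation. Passing to $\sigma s_i$ just transposes the adjacent entries $\sigma_i$ and $\sigma_{i+1}$, hence changes only the relative position of those two \emph{values}; therefore $R(\sigma s_i)$ can differ from $R(\sigma)$ only at the value $\sigma_i$, and does so exactly when $\sigma_{i+1}=\sigma_i+1$, in which case $R(\sigma s_i)=R(\sigma)\cup\{\sigma_i\}$ with $\sigma_i\notin R(\sigma)$. Combined with convexity, this shows that the minimal elements of $\mathcal Y_I$ for $\le_w$ are exactly the $\sigma\in\mathcal Y_I$ all of whose descents are by $1$ (that is, $\sigma_i>\sigma_{i+1}\Rightarrow\sigma_i=\sigma_{i+1}+1$), and dually that the maximal elements are the $\sigma\in\mathcal Y_I$ all of whose ascents are by $1$.

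It then remains to identify these extremal permutations. An easy induction on $n$, looking at the position of the entry $1$, shows that a permutation all of whose descents are by $1$ is precisely a concatenation of reversed blocks of consecutive integers, $\mathrm{rev}(1,\dots,c_1)\,\mathrm{rev}(c_1+1,\dots,c_1+c_2)\cdots$; such permutations are parametrised by compositions of $n$, equivalently by the set $C\subseteq\{1,\dots,n-1\}$ of block endpoints, and a direct computation gives that the recoil set of such a permutation equals $\{1,\dots,n-1\}\setminus C$. Hence for each $I$ there is a \emph{unique} $\alpha_I\in\mathcal Y_I$ all of whose descents are by $1$, and — applying the bijection $\sigma\mapsto w_0\sigma$, which interchanges ascents and descents (with their sizes) and carries $\mathcal Y_I$ onto another recoil class — a \emph{unique} $\beta_I\in\mathcal Y_I$ all of whose ascents are by $1$. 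Since a finite poset with a unique minimal (resp.\ maximal) element has it as least (resp.\ greatest) element, $\alpha_I\le_w\sigma\le_w\beta_I$ for every $\sigma\in\mathcal Y_I$, so $\mathcal Y_I\subseteq[\alpha_I,\beta_I]$; with convexity this yields $\mathcal Y_I=[\alpha_I,\beta_I]$.

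I expect the only genuine work to be in the third step: checking that ``all descents by $1$'' characterises the reversed-block permutations and computing their recoil sets correctly, where one must keep the word/inverse (descent/recoil) bookkeeping straight. The monotonicity and convexity arguments, the local cover analysis, and the passage from ``unique minimal element'' to ``least element'' are all routine.
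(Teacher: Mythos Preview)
Your argument is correct. The paper does not actually prove Proposition~\ref{recoilclass}: it is introduced as ``well-known'' and the text that follows only records the explicit form of $\alpha_I$ (as the product of the longest elements of the Young subgroup $\langle s_i:i\in I\rangle$, i.e.\ your reversed-block permutation) and the identity $\beta_I=\alpha_{I^c}w_0$, illustrated by an example. So there is no proof in the paper to compare with; you have supplied one.

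Your ingredients match the paper's setup closely: the cover analysis in your step~3 is exactly Lemma~\ref{>2}, and your description of $\alpha_I$ coincides with the paper's. The only cosmetic difference is in the duality you invoke for $\beta_I$: you use left multiplication $\sigma\mapsto w_0\sigma$, which swaps ascents and descents position by position (with their sizes) and sends $\mathcal Y_I$ to some other recoil class, whereas the paper uses right multiplication $\sigma\mapsto\sigma w_0$, for which one has the cleaner statement $R(\sigma w_0)=R(\sigma)^c$ and hence $\beta_I=\alpha_{I^c}w_0$ directly. Either map does the job.
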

 The group generated by the $s_i$,  for $i\in I$, is a product of symmetric groups acting on intervals of the form $[k,l]$ and $\alpha_I$ is the product of the maximal elements (for the weak order) of these symmetric groups, for example if $n=12$ and $I=\{1,4,5,6,9,10\}$, then the product is $S_{[1,2]}\times S_{[3]}\times S_{[4,7]}\times S_{[8]}\times S_{[9,11]}\times S_{[12]}$ and 
$\alpha_I=21|3|7654|8|\,11\, 10\, 9|\,12$, where we put bars between the successive descending runs. In order to find $\beta_I$ note that $\sigma\to \sigma w_0$ is an order reversing
 bijection between $R(I)$ and $R(I^{c})$ therefore $\beta_I=\alpha_{I^c}w_0$. In our example $I^c=\{2,3,7,8,11\}$ which gives
 $\beta _I=11\,12|\,10|\,789|6|5|234|1$, where we put bars between ascending runs.
 In particular ${\mathcal Y}_I$ is a connected subset of  the Cayley graph.
A criterion for two neighbouring permutations in the Cayley graph of $S_n$ to belong to the same recoil class is the following.
\begin{lemma}\label{>2}
Let $\sigma\in S_n$ and  $i\in\{1,\ldots,n-1\}$ then 
$R(\sigma)=R(\sigma s_i)$
 if and only if
$|\sigma_i-\sigma_{i+1}|\geq 2$.
\end{lemma}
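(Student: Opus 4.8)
\emph{Proof proposal.} The plan is to use the description of recoils in terms of the relative order of values in the one-line notation, together with the explicit effect of right multiplication by $s_i$.

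First, I would record the elementary but crucial fact that the one-line word of $\sigma s_i$ is obtained from that of $\sigma$ by exchanging the two entries in positions $i$ and $i+1$: indeed $(\sigma s_i)(i)=\sigma_{i+1}$ and $(\sigma s_i)(i+1)=\sigma_i$, while $(\sigma s_i)(k)=\sigma_k$ for $k\neq i,i+1$. Next I would recall from Section~\ref{sec:3} that $j\in R(\pi)$ precisely when $j+1$ occurs before $j$ in the word $\pi_1\ldots\pi_n$; thus whether $j$ lies in $R(\pi)$ depends only on the relative position, in the word, of the two values $j$ and $j+1$.

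The core observation is then that exchanging the entries in positions $i$ and $i+1$ leaves the relative order of \emph{every} pair of distinct values unchanged, with the single exception of the pair $\{\sigma_i,\sigma_{i+1}\}$, whose order is reversed. Consequently $R(\sigma)$ and $R(\sigma s_i)$ can differ only at an index $j$ for which $\{j,j+1\}=\{\sigma_i,\sigma_{i+1}\}$, and such a $j$ exists if and only if $\sigma_i$ and $\sigma_{i+1}$ are consecutive integers, i.e.\ $|\sigma_i-\sigma_{i+1}|=1$. This immediately yields the ``if'' direction: when $|\sigma_i-\sigma_{i+1}|\geq 2$ there is no candidate index, hence $R(\sigma)=R(\sigma s_i)$.

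For the converse, suppose $|\sigma_i-\sigma_{i+1}|=1$ and set $j=\min(\sigma_i,\sigma_{i+1})$, so that $\{j,j+1\}=\{\sigma_i,\sigma_{i+1}\}$ and these two values occupy positions $i,i+1$ in both words. In exactly one of $\sigma$, $\sigma s_i$ does $j+1$ precede $j$ (namely the one in which $j+1$ is the entry at position $i$), so $j$ belongs to the recoil set of exactly one of them and $R(\sigma)\neq R(\sigma s_i)$. I do not expect any genuine obstacle here; the only point needing a little care is the verification that exchanging two adjacent entries reverses the relative order of precisely one pair of values, which is immediate once written out.
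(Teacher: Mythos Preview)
Your argument is correct and follows the same approach as the paper: both observe that right-multiplying by $s_i$ swaps the entries in positions $i$ and $i+1$, so a recoil is created or destroyed if and only if these two entries are consecutive integers, i.e.\ $|\sigma_i-\sigma_{i+1}|=1$. Your write-up simply spells out in detail the one-line observation the paper records.
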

\begin{proof} 
If $\sigma=\sigma_1\sigma_2\ldots\sigma_i\sigma_{i+1}\ldots\sigma_n$ then $\sigma s_i=\sigma_1\sigma_2\ldots\sigma_{i+1}\sigma_i\ldots\sigma_n$. 
A recoil is created or destroyed by permuting 
$\sigma_i$ and $\sigma_{i+1}$ if and only if 
$|\sigma_i-\sigma_{i+1}|=1$.
\end{proof}
 The following result is the main combinatorial property of recoil classes that we will use.

\begin{proposition}\label{recoil}
Let $\sigma,\pi,\rho\in S_n$ and $i\leq n-1$ be such that $$\sigma=\pi\rho \quad 
\text{and} \quad R(\sigma s_i)=R(\sigma)$$  then there exists a unique factorization 
$\sigma s_i=\pi'\rho'$ where either : 
\begin{enumerate}
\item
$\pi=\pi'$ while $\rho^{-1}\rho'\in S$ and $R(\rho)=R(\rho')$ or
\item
$\rho=\rho'$ while $\pi^{-1}\pi'\in S$ and $R(\pi)=R(\pi')$ .
\end{enumerate}
\end{proposition}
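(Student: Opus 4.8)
The plan is to produce the factorization explicitly by transporting the generator $s_i$ past the right-hand factor $\rho$, and everything will rest on Lemma~\ref{>2} together with two bookkeeping facts: right multiplication by $s_i$ exchanges the entries in positions $i$ and $i+1$ of the one-line word, and the conjugate $\rho s_i\rho^{-1}$ is the transposition $(\rho_i,\rho_{i+1})$ of the \emph{values} $\rho_i,\rho_{i+1}$. I would write $\sigma s_i=\pi(\rho s_i)$ and split according to whether $|\rho_i-\rho_{i+1}|\ge 2$ or $|\rho_i-\rho_{i+1}|=1$.

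If $|\rho_i-\rho_{i+1}|\ge 2$, set $\pi'=\pi$, $\rho'=\rho s_i$: then $\rho^{-1}\rho'=s_i\in S$ and Lemma~\ref{>2} applied to $\rho$ gives $R(\rho')=R(\rho)$, so (1) holds. If $|\rho_i-\rho_{i+1}|=1$, write $\{\rho_i,\rho_{i+1}\}=\{j,j+1\}$; then $\rho s_i\rho^{-1}=s_j$, so $\sigma s_i=\pi s_j\rho$, and setting $\pi'=\pi s_j$, $\rho'=\rho$ gives $\pi^{-1}\pi'=s_j\in S$, leaving only $R(\pi')=R(\pi)$ to check. This is where the hypothesis enters: $R(\sigma s_i)=R(\sigma)$ means $|\sigma_i-\sigma_{i+1}|\ge 2$ by Lemma~\ref{>2}, while $\sigma=\pi\rho$ gives $\{\sigma_i,\sigma_{i+1}\}=\{\pi_{\rho_i},\pi_{\rho_{i+1}}\}=\{\pi_j,\pi_{j+1}\}$; hence $|\pi_j-\pi_{j+1}|=|\sigma_i-\sigma_{i+1}|\ge 2$ and Lemma~\ref{>2} yields $R(\pi s_j)=R(\pi)$, so (2) holds.

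For uniqueness, observe that a factorization of type (1) forces $\pi'=\pi$ and hence $\rho'=\rho s_i$, while one of type (2) forces $\rho'=\rho$ and hence $\pi'=\pi\rho s_i\rho^{-1}$; these two candidate pairs are always distinct (equality would force $s_i=e$). The computations above show the first candidate realizes (1) exactly when $|\rho_i-\rho_{i+1}|\ge 2$ and the second realizes (2) exactly when $|\rho_i-\rho_{i+1}|=1$ (the condition $R(\pi')=R(\pi)$ being then automatic from the hypothesis). Since these two conditions on $|\rho_i-\rho_{i+1}|$ are mutually exclusive and exhaustive, exactly one admissible factorization exists.

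I do not anticipate a real obstacle: the proof is short. The only point requiring care is the bookkeeping in case (2) — distinguishing that $\rho s_i$ permutes positions while $\rho s_i\rho^{-1}$ permutes the corresponding values, and that $\sigma=\pi\rho$ means $\sigma_k=\pi_{\rho_k}$ — since it is exactly this identification that turns the hypothesis $|\sigma_i-\sigma_{i+1}|\ge 2$ into the needed inequality $|\pi_j-\pi_{j+1}|\ge 2$.
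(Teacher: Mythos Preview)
Your proof is correct and follows essentially the same route as the paper: both split on whether $|\rho_i-\rho_{i+1}|\ge 2$ or $=1$, both invoke Lemma~\ref{>2} at the same points, and both identify the second candidate $\pi'=\pi(\rho s_i\rho^{-1})$ as the transposition $(\rho_i\,\rho_{i+1})$ acting on $\pi$. The only cosmetic difference is that the paper is slightly more explicit about why the unused candidate fails in each case (in the first case, $\rho s_i\rho^{-1}$ is not simple so $\pi^{-1}\pi'\notin S$; in the second, $R(\rho)\ne R(\rho s_i)$), whereas you fold this into the ``exactly when'' clause of your uniqueness paragraph.
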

\begin{proof}
There exist  two factorizations $\sigma s_i=\pi'\rho'$ such that either $\pi=\pi'$ or $\rho=\rho'$, namely

\begin{itemize}
\item
$\pi'=\pi$, $\rho'=\rho s_i$
\item
 $\pi'=\pi(\rho s_i\rho^{-1})$, $\rho'=\rho$
 \end{itemize}

Suppose that $R(\rho)=R(\rho s_i)$, by Lemma \ref{>2} this happens if and only if $|\rho_i-\rho_{i+1}|\geq 2$, then the factorization
$\sigma s_i=\pi\rho'=\pi(\rho s_i)$ satisfies (1). In the other factorization, $\sigma s_i=\pi'\rho$, one has $\pi'=\pi t$ where $t$ is the transposition $(\rho_i\,\rho_{i+1})$. Since $t$ is not a simple transposition one has $\pi^{-1}\pi'\notin S$ therefore  case $(1)$ of the Proposition holds but not $(2)$.

Suppose now that $R(\rho)\ne R(\rho s_i)$,
this happens if and only if $|\rho_i-\rho_{i+1}|=1$. In this case let
$j=\min (\rho_i,\rho_{i+1})$ and $j+1=\max (\rho_i,\rho_{i+1})$ then
$\rho s_i\rho^{-1}=s_j$  and $|\pi_j-\pi_{j+1}|=|\sigma_i-\sigma_{i+1}|\geq 2$ therefore, by Lemma \ref{>2},
$R(\pi)=R(\pi s_j)$. Now case $(2)$ holds but not $(1)$.

Finally we see that  either $(1)$ or $(2)$ holds and  Proposition \ref{recoil} is proved.
\end{proof}
\section{Covering of graphs}
We consider graphs  which are simple, non-oriented and without loops.

\begin{definition}\label{covering} Let $G$ and $H$ be graphs with respective vertex sets $V,\,W$ and edge sets 
$E,\,F$. A map $\Pi: V\to W$ is a {\sl covering map} for $(G,H)$  if 
\begin{itemize}
\item
$\Pi$ is surjective, 
\item for  each edge $uv\in E$ one has  $\Pi(u)\Pi(v)\in F$
\item  for each edge  $wz\in F$, for each vertex $u\in \Pi^{-1}(w)$   there exists a unique  $v\in V$
  such that 
$uv\in E$  and  $\Pi(v)=z$.
\end{itemize}
\end{definition}
The following is standard and we recall the argument.
\begin{lemma}\label{cover}
Let $\Pi:V\to W$ be a covering map as in Definition \ref{covering}, with $H$ a connected graph, then all fibers of $\Pi$ have the same cardinality 
i.e. $|\Pi^{-1}(w)|=|\Pi^{-1}(w')|$ for all $w,w'\in W$.
\end{lemma}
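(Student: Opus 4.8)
The plan is to prove the local statement that adjacent vertices of $H$ have fibers of equal cardinality, and then propagate this equality along paths, using that $H$ is connected.

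For the local step, fix an edge $wz\in F$. The third axiom of Definition \ref{covering} associates to every $u\in\Pi^{-1}(w)$ a unique vertex, which I will call $\phi(u)\in V$, with $u\phi(u)\in E$ and $\Pi(\phi(u))=z$; this defines a map $\phi\colon\Pi^{-1}(w)\to\Pi^{-1}(z)$. Since the graphs are non-oriented, $zw=wz$ is also an edge of $F$, so the same axiom applied with the roles of $w$ and $z$ interchanged yields a map $\psi\colon\Pi^{-1}(z)\to\Pi^{-1}(w)$ with $v\psi(v)\in E$ and $\Pi(\psi(v))=w$ for every $v\in\Pi^{-1}(z)$.

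I would then check that $\phi$ and $\psi$ are mutually inverse. Given $u\in\Pi^{-1}(w)$, set $v=\phi(u)$; then $uv\in E$ and $\Pi(u)=w$, and since $\psi(v)$ is characterized as the \emph{unique} vertex of $\Pi^{-1}(w)$ adjacent to $v$, the uniqueness forces $\psi(v)=u$. Exchanging $w$ and $z$ gives likewise $\phi(\psi(v))=v$ for every $v\in\Pi^{-1}(z)$. Hence $\phi$ is a bijection and $|\Pi^{-1}(w)|=|\Pi^{-1}(z)|$.

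Finally, since $H$ is connected, any two vertices $w,w'\in W$ are joined by a path $w=w_0,w_1,\ldots,w_k=w'$ in $H$, and applying the local step to each edge $w_jw_{j+1}$ yields $|\Pi^{-1}(w)|=|\Pi^{-1}(w_1)|=\cdots=|\Pi^{-1}(w')|$, which is the assertion. The only point needing care is the verification that $\phi$ and $\psi$ are inverse bijections; this is exactly where the uniqueness clause of the covering map definition and the absence of orientation are both used, while surjectivity of $\Pi$ ensures in addition that these common fibers are non-empty.
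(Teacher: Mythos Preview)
Your proof is correct and follows essentially the same approach as the paper: both define the two lifting maps between the fibers over adjacent vertices, argue (by the uniqueness clause in Definition~\ref{covering}) that they are mutually inverse bijections, and then propagate the equality of fiber sizes along a path using connectedness of $H$. Your write-up is just slightly more explicit in checking that $\psi\circ\phi$ and $\phi\circ\psi$ are the identity.
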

\begin{proof} 
Let $w,z\in W$  such that  $wz\in F$. For each $u\in \Pi^{-1}(w)$ let ${\mathfrak v}(u)\in\Pi^{-1}(z)$ be the other end  of the edge above $wz$. Similarly for $v\in \Pi^{-1}(z)$ let ${\mathfrak u}(v)\in\Pi^{-1}(w)$ be the other end  of the edge above $wz$, then ${\mathfrak v}:\Pi^{-1}(w)\to\Pi^{-1}(z)$ and 
${\mathfrak u}:\Pi^{-1}(z)\to\Pi^{-1}(w)$ are bijections, inverse to each other. 
It follows that $|\Pi^{-1}(z)|=|\Pi^{-1}(w)|$.  By induction on the distance to $w$ one concludes that the cardinality of the fibers is constant on the connected component of $w$ in $H$.

\end{proof}
\section{The recoil algebra}
\subsection{The product struture}
Let $I,J\subset S$, consider the product space ${\mathcal Y}_I\times {\mathcal Y}_J$ with the product graph structure. 
Let $K\subset S$ and 
\begin{equation}\label{Z}
{\mathcal Z}_{IJK}=\{(\pi,\rho)\in{\mathcal Y}_I\times {\mathcal Y}_J\,|\,\pi\rho\in {\mathcal Y}_K\}
\end{equation}
 with its induced graph structure. It follows from  Proposition \ref{recoil}  that the product map 
\begin{equation}\label{ZY}{\mathcal Z}_{IJK}\to {\mathcal Y}_K:(\pi,\rho)\mapsto \pi\rho
\end{equation} 
is a covering map therefore, by 
Lemma \ref{cover}, for $\sigma\in{\mathcal Y}_K$  the number of pairs $\pi,\rho$ in  ${\mathcal Y}_I\times {\mathcal Y}_J$ such that $\pi\rho=\sigma$
depends only on $I,J,K$. If $a_{IJK}$ is this number then equation (\ref{descentalg}) follows and the
${\mathcal Y}_I$ generate an algebra.
\subsection{A refinement of the coefficients of the descent algebra}
It follows from our result that the graph ${\mathcal Z}_{IJK}$ is a union of connected component, each of which maps to 
${\mathcal Y}_{K}$ with some multiplicity therefore one can write 
\begin{equation}\label{al}
a_{IJK}=\sum_l\lambda^{IJK}_l
\end{equation}
 where $\lambda^{IJK}$ is the integer partition corresponding to the multiplicities of the connected components of ${\mathcal Z}_{IJK}$. This yields a refinement of structure coefficients $a_{IJK}$. A natural problem is to determine which integer partitions can appear in (\ref{al}). In section \ref{sec:lifting}
 we will investigate some further geometric properties of the covering map.
 
\section{Examples}\label{sec:examples}
\subsection{The case of $S_4$}

We consider the permutations with recoil sets $\{1\}$ and $\{3\}$ in $S_4$, their graphs, ordered from left to right in the weak order, are

\bigskip

\begin{center}
\begin{tikzpicture}[scale=1]

\node  at (0,0){$\scriptstyle 2134$};
\node  at (2,0){$\scriptstyle 2314$};
\node  at (4,0){$\scriptstyle 2341$};
\node  at (8,0){$\scriptstyle 1243$};
\node  at (10,0){$\scriptstyle 1423$};
\node  at (12,0){$\scriptstyle 4123$};
\node  at (2,-.7){${\mathcal Y}_1$};
\node  at (10,-.7){${\mathcal Y}_3$};
\draw (.5,0)--(1.5,0);\draw (2.5,0)--(3.5,0);
\draw (8.5,0)--(9.5,0);\draw (10.5,0)--(11.5,0);

\end{tikzpicture}
\end{center}

\bigskip

We draw the recoil classes inside the product graph, labelling the vertices of the graph by the products of the permutations and colouring the horizontal edges  in blue and the vertical ones in red.
The other edges of the product graph are dotted.

\bigskip

\begin{center}
\begin{tikzpicture}[scale=1.2]

\node  at (-.2,0){$\scriptstyle 2134$};
\node  at (-.2,1){$\scriptstyle 2314$};
\node  at (-.2,2){$\scriptstyle 2341$};
\node  at (1,2.8){$\scriptstyle 1243$};
\node  at (2,2.8){$\scriptstyle 1423$};
\node  at (3,2.8){$\scriptstyle 4123$};

\node  at (1,0){$\scriptstyle 2143$};
\node  at (1,2){$\scriptstyle 2314$};
\node  at (3,0){$\scriptstyle 4213$};
\node  at (1,1){$\scriptstyle 2341$};
\node  at (2,1){$\scriptstyle 2431$};
\node  at (2,0){$\scriptstyle 2413$};
\node  at (2,2){$\scriptstyle 2134$};
\node  at (3,2){$\scriptstyle 1234$};
\node  at (3,1){$\scriptstyle  4231$};
\draw[color=blue,thick] (1.3,0)--(1.7,0);\draw[color=blue,thick] (2.3,0)--(2.7,0);
\draw[color=blue,thick] (2.3,1)--(2.7,1);\draw[color=blue,thick] (1.3,2)--(1.7,2);
\draw[color=blue,thick] (1.3,2.8)--(1.7,2.8);\draw[color=blue,thick] (2.3,2.8)--(2.7,2.8);
\draw[color=red,thick] (1,1.2)--(1,1.8);\draw[color=red,thick] (2,.2)--(2,.8);
\draw[color=red,thick] (3,.2)--(3.01,.8);
\draw[dotted](1,.8)--(1,0.2);\draw[dotted](1.3,1)--(1.7,1);
\draw[dotted](2,1.2)--(2,1.8);\draw[dotted](3,1.2)--(3,1.8);
\draw[dotted](2.3,2)--(2.7,2);
\draw[color=red,thick](-.2,.2)--(-.19,.8);
\draw[color=red,thick](-.2,1.2)--(-.19,1.8);
\node  at (-.2,2.4){${\mathcal Y}_1$};
\node  at (.4,2.8){${\mathcal Y}_3$};
\end{tikzpicture}
\end{center}
We see that $$Y_{1}Y_{3}=Y_\emptyset+Y_{1}+Y_{1,3}.$$

\medskip
Consider now the recoil class ${\mathcal Y}_{2}$, oriented from bottom to top:
$$
\begin{tikzpicture}[scale=1.3]

\node  at (0,0){$\scriptstyle 1324$};
\node  at (-1,1){$\scriptstyle 3124$};
\node  at (1,1){$\scriptstyle 1342$};
\node  at (0,2){$\scriptstyle 3142$};
\node  at (0,3){$\scriptstyle 3412$};

\draw(-.2,.2)--(-.8,.8);
\draw (.2,.2)--(.8,.8);
\draw (-.8,1.2)--(-.2,1.8);
\draw (.8,1.2)--(.2,1.8);
\draw (0,2.2)--(0,2.8);

\end{tikzpicture}
$$
\medskip

We depict the graph of the product ${\mathcal Y}_{2}{\mathcal Y}_{3}$ below

\begin{center}
\begin{tikzpicture}[scale=1.1]

\node  at (0,0){$\scriptstyle 3124$};
\node  at (-1,1){$\scriptstyle 1324$};
\node  at (1,1){$\scriptstyle 3142$};
\node  at (0,2){$\scriptstyle 1342$};
\node  at (0,3){$\scriptstyle 4312$};
\begin{scope}[shift={(2.5,.75)}]
\node  at (0,0){$\scriptstyle 3214$};
\node  at (-1,1){$\scriptstyle 1234$};
\node  at (1,1){$\scriptstyle 3412$};
\node  at (0,2){$\scriptstyle 1432$};
\node  at (0,3){$\scriptstyle 4132$};
\draw[dotted](-.2,.2)--(-.8,.8);
\draw[dotted] (.2,.2)--(.8,.8);
\draw[dotted] (-.8,1.2)--(-.2,1.8);
\draw[dotted] (.8,1.2)--(.2,1.8);
\draw[color=red,thick] (0,2.2)--(0,2.8);
\draw[color=blue,thick] (0.25,0.075)--(2.25,0.675);
\draw[dotted] (-.75,1.075)--(1.25,1.675);
\draw[dotted] (1.25,1.075)--(3.25,1.675);
\draw[dotted] (0.25,2.075)--(2.25,2.675);
\draw[dotted] (0.25,3.075)--(2.25,3.675);
\end{scope}
\begin{scope}[shift={(5,1.5)}]
\node  at (0,0){$\scriptstyle 3241$};
\node  at (-1,1){$\scriptstyle 1243$};
\node  at (1,1){$\scriptstyle 3421$};
\node  at (0,2){$\scriptstyle 1423$};
\node  at (0,3){$\scriptstyle 4123$};
\draw[dotted](-.2,.2)--(-.8,.8);
\draw[color=red,thick] (.2,.2)--(.8,.8);
\draw[color=red,thick] (-.8,1.2)--(-.2,1.8);
\draw[dotted] (.8,1.2)--(.2,1.8);
\draw[color=red,thick] (0,2.2)--(0,2.8);
\end{scope}
\draw[color=red,thick](-.2,.2)--(-.8,.8);
\draw[color=red,thick] (.2,.2)--(.8,.8);
\draw[color=red,thick] (-.8,1.2)--(-.2,1.8);
\draw[color=red,thick] (.8,1.2)--(.2,1.8);
\draw[dotted] (0,2.2)--(0,2.8);

\draw[dotted] (0.25,0.075)--(2.25,0.675);
\draw[dotted] (-.75,1.075)--(1.25,1.675);
\draw[color=blue,thick]  (1.25,1.075)--(3.25,1.675);
\draw[dotted] (0.25,2.075)--(2.25,2.675);
\draw[color=blue,thick] (0.25,3.075)--(2.25,3.675);
\end{tikzpicture}
\end{center}

The picture shows that 
$$Y_{2}Y_{3}=Y_{\emptyset}+Y_{2}+Y_{3}+Y_{2,3}+Y_{1,2}.$$

\subsection{An example in $S_5$}\label{ex_monodromy}


\begin{center}
\begin{tikzpicture}[scale=.65]

\node at (-8,0)  {Here is the class ${\mathcal Y}_{1,3}$:};
\node  at (0,2.2){$\scriptstyle 42531  $};
\node  at (0,-2){$\scriptstyle 42135 $};
\node  at (2,1.1){$\scriptstyle 42513 $};
\node  at (1.73,-.9){$\scriptstyle 42153 $};
\node  at (-2,1.1){$\scriptstyle 42351  $};
\node  at (-2,-.9){$\scriptstyle 42315 $};
\node  at (1,0){$\scriptstyle  24531 $};
\node  at (.8,-4){$\scriptstyle 24135  $};
\node  at (3,-.9){$\scriptstyle 24513  $};
\node  at (2.73,-3){$\scriptstyle 24153 $};
\node  at (-.73,-.9){$\scriptstyle 24351  $};
\node  at (-.93,-3){$\scriptstyle 24315 $};
\node  at (1.73,3.2){$\scriptstyle 45231 $};
\node  at (3.66,2){$\scriptstyle 45213 $};
\node  at (2.73,-5.3){$\scriptstyle 21435 $};
\node  at (4.6,-4){$\scriptstyle 21453 $};
\draw  [dashed,thin] (-1.73,1)--(0,2)--(1.73,3)--(3.46,2)--(1.73,1)--(0,2);
\draw  [dashed,thin]  (1.73,-1)--(0,-2)--(-1.73,-1)--(-.73,-3)--(-.73,-1)--(1,0);
\draw  [dashed,thin]  (2.73,-1)--(2.73,-3)--(4.46,-4)--(2.73,-5)--(1,-4)--(2.73,-3);
\draw [dashed,thin] (0,2)--(1,0);\draw [dashed] (1.73,1)--(2.73,-1);\draw [dashed] (-1.73,1)--(-.73,-1);
\draw [dashed,thin] (-1.73,1)--(-1.73,-1);\draw [dashed] (1,0)--(2.73,-1);
\draw [dashed,thin] (1.73,1)--(1.73,-1);\draw [dashed] (-.73,-3)--(1,-4);\draw [dashed] (0,-2)--(1,-4);
\draw [dashed,thin] (1.73,-1)--(2.73,-3);
\end{tikzpicture}
\end{center}

It has multiplicity $2$ in the product $Y_{2,3}Y_{3,4}$ whose classes are depicted below
\begin{center}
\begin{tikzpicture}[scale=.45]
\node  at (0,-2.2){$\scriptstyle 41352$};
\node  at (0,2.2){$\scriptstyle 45312$};
\node  at (2,-.9){$\scriptstyle 41532$};
\node  at (2,1.1){$\scriptstyle 45132$};
\node  at (-2,1.1){$\scriptstyle 43512$};
\node  at (-2,-.9){$\scriptstyle 43152$};
\node  at (-3.6,-2){$\scriptstyle 43125$};
\node  at (-2,-3.1){$\scriptstyle 41325$};
\node  at (3.6,-2.1){$\scriptstyle 14532$};
\node  at (2,-3.1){$\scriptstyle 14352$};
\node  at (0,-4.2){$\scriptstyle 14325$};

\node  at (0,-6.2){${\mathcal Y}_{2,3}$};
\draw[dashed,thin] (0,-2)--(1.73,-1)--(1.73,1)--(0,2)--(-1.73,1)--(-1.73,-1)--(0,-2);
\draw[dashed,thin] (-1.73,-1)--(-3.46,-2)--(-1.73,-3)--(0,-2)--(1.73,-3)--(3.46,-2)--(1.73,-1);
\draw[dashed,thin] (-1.73,-3)--(0,-4)--(1.73,-3);

\node  at (10,4){$\scriptstyle 54123$};
\node  at (12,2){$\scriptstyle 51423$};
\node  at (14,0){$\scriptstyle 15423$};
\node  at (10,0){$\scriptstyle 51243$};
\node  at (12,-2){$\scriptstyle 15243$};
\node  at (10,-4){$\scriptstyle 12543$};
\draw[dashed,thin] (10,4)--(14,0)--(10,-4);
\draw[dashed,thin] (12,2)--(10,0)--(12,-2);
\node  at (11,-6.2){${\mathcal Y}_{3,4}$};
\end{tikzpicture}
\end{center}

Figure \ref{fig:Z13} shows ${\mathcal Z}_{2,3/3,4/1,3}$ with the vertices indexed by the products in ${\mathcal Y}_{1,3}$. The covering graph is connected.
\begin{figure}
\begin{tikzpicture}[scale=.5]

\node  at (0,2){$\bf\scriptstyle 21453 $};

\node  at (-1.73,1){$\bf\scriptstyle  21435 $};

\draw[dotted] (0,-2)--(1.73,-1)--(1.73,1)--(0,2)--(-1.73,1)--(-1.73,-1)--(0,-2);
\draw[dotted] (-1.73,-1)--(-3.46,-2)--(-1.73,-3)--(0,-2)--(1.73,-3)--(3.46,-2)--(1.73,-1);
\draw[dotted] (-1.73,-3)--(0,-4)--(1.73,-3);
\begin{scope}[shift={(6,-6)}]
\node  at (0,-2){$\bf\scriptstyle 24513  $};
\node  at (0,2){$\bf\scriptstyle 24153 $};
\node  at (1.73,-1){$\bf\scriptstyle  24315 $};
\node  at (1.73,1){$\bf\scriptstyle  24351 $};
\node  at (-1.73,1){$\bf\scriptstyle  24135 $};
\node  at (-1.73,-1){$\bf\scriptstyle  24531 $};

\draw[dotted] (0,-2)--(1.73,-1)--(1.73,1)--(0,2)--(-1.73,1)--(-1.73,-1)--(0,-2);
\draw[dotted] (-1.73,-1)--(-3.46,-2)--(-1.73,-3)--(0,-2)--(1.73,-3)--(3.46,-2)--(1.73,-1);
\draw[dotted] (-1.73,-3)--(0,-4)--(1.73,-3);
\end{scope}
\begin{scope}[shift={(12,-12)}]
\node  at (0,-2){$\bf\scriptstyle 42513  $};
\node  at (0,2){$\bf\scriptstyle 42153 $};
\node  at (1.73,-1){$\bf\scriptstyle  42315 $};
\node  at (1.73,1){$\bf\scriptstyle  42351 $};
\node  at (-1.73,1){$\bf\scriptstyle  42135 $};
\node  at (-1.73,-1){$\bf\scriptstyle  42531 $};
\node  at (-3.46,-2){$\bf\scriptstyle  45231 $};
\node  at (-1.73,-3){$\bf\scriptstyle  45213 $};

\draw[dotted] (0,-2)--(1.73,-1)--(1.73,1)--(0,2)--(-1.73,1)--(-1.73,-1)--(0,-2);
\draw[dotted] (-1.73,-1)--(-3.46,-2)--(-1.73,-3)--(0,-2)--(1.73,-3)--(3.46,-2)--(1.73,-1);
\draw[dotted] (-1.73,-3)--(0,-4)--(1.73,-3);
\end{scope}
\begin{scope}[shift={(0,-12)}]
\node  at (0,-2){$\bf\scriptstyle 24153  $};
\node  at (0,2){$\bf\scriptstyle 24513 $};
\node  at (1.73,-1){$\bf\scriptstyle  24135 $};
\node  at (1.73,1){$\bf\scriptstyle  24531 $};
\node  at (-1.73,1){$\bf\scriptstyle  24315 $};
\node  at (-1.73,-1){$\bf\scriptstyle  24351 $};

\node  at (3.46,-2){$\bf\scriptstyle 21435 $};
\node  at (1.73,-3){$\bf\scriptstyle  21453 $};

\draw[dotted] (0,-2)--(1.73,-1)--(1.73,1)--(0,2)--(-1.73,1)--(-1.73,-1)--(0,-2);
\draw[dotted] (-1.73,-1)--(-3.46,-2)--(-1.73,-3)--(0,-2)--(1.73,-3)--(3.46,-2)--(1.73,-1);
\draw[dotted] (-1.73,-3)--(0,-4)--(1.73,-3);

\end{scope}
\begin{scope}[shift={(6,-18)}]
\node  at (0,-2){$\bf\scriptstyle 42153  $};
\node  at (0,2){$\bf\scriptstyle 42513 $};
\node  at (1.73,-1){$\bf\scriptstyle  42135 $};
\node  at (1.73,1){$\bf\scriptstyle  42531 $};
\node  at (-1.73,1){$\bf\scriptstyle  42315 $};
\node  at (-1.73,-1){$\bf\scriptstyle  42351 $};

\draw[dotted] (0,-2)--(1.73,-1)--(1.73,1)--(0,2)--(-1.73,1)--(-1.73,-1)--(0,-2);
\draw[dotted] (-1.73,-1)--(-3.46,-2)--(-1.73,-3)--(0,-2)--(1.73,-3)--(3.46,-2)--(1.73,-1);
\draw[dotted] (-1.73,-3)--(0,-4)--(1.73,-3);

\end{scope}
\begin{scope}[shift={(0,-24)}]

\node  at (0,2){$\bf\scriptstyle 45213 $};

\node  at (1.73,1){$\bf\scriptstyle  45231 $};

\draw[dotted] (0,-2)--(1.73,-1)--(1.73,1)--(0,2)--(-1.73,1)--(-1.73,-1)--(0,-2);
\draw[dotted] (-1.73,-1)--(-3.46,-2)--(-1.73,-3)--(0,-2)--(1.73,-3)--(3.46,-2)--(1.73,-1);
\draw[dotted] (-1.73,-3)--(0,-4)--(1.73,-3);
\end{scope}

\draw[color=purple,thick] (0,2)--(-1.73,1);
\draw[color=blue] (0,2)--(6,-4);
\draw[color=blue] (-1.73,1)--(4.27,-5);
\draw[color=purple,thick] (6,-8)--(4.27,-7);

\draw[color=blue] (6,-4)--(12,-10);
\draw[color=blue] ((6,-4)--(0,-10);
\draw[color=purple,thick] (6,-4)--(4.27,-5);

\draw[color=blue] (4.27,-5)--(10.27,-11);
\draw[color=blue] (4.27,-5)--(-1.73,-11);

\draw[color=blue] (4.27,-7)--(10.27,-13);
\draw[color=blue] (4.27,-7)--(-1.73,-13);

\draw[color=blue] (7.73,-5)--(13.73,-11);
\draw[color=blue] (7.73,-5)--(1.73,-11);
\draw[color=purple,thick] (7.73,-5)--(7.73,-7);

\draw[color=blue] (7.73,-7)--(13.73,-13);
\draw[color=blue] (7.73,-7)--(1.73,-13);

\draw[color=blue] (6,-8)--(12,-14);
\draw[color=blue] (6,-8)--(0,-14);

\draw[color=purple,thick] (1.73,-11)--(0,-10);
\draw[color=blue] (1.73,-11)--(7.73,-17)--(13.73,-11);

\draw[color=blue] (0,-10)--(6,-16)--(12,-10);

\draw[color=purple,thick] (-1.73,-11)--(-1.73,-13);
\draw[color=blue] (-1.73,-11)--(4.27,-17)--(10.27,-11);

\draw[color=blue] (-1.73,-13)--(4.27,-19)--(10.27,-13);

\draw[color=purple,thick] (0,-14)--(1.73,-13)--(3.46,-14)--(1.73,-15)--(0,-14);
\draw[color=blue] (0,-14)--(6,-20)--(12,-14);
\draw[color=blue] (1.73,-13)--(7.73,-19)--(13.73,-13);

\draw[color=purple,thick] (10.27,-11)--(12,-10);
\draw[color=purple,thick] (10.27,-13)--(12,-14)--(10.27,-15)--(8.54,-14)--(10.27,-13);
\draw[color=purple,thick] (13.73,-11)--(13.73,-13);
\draw[color=purple,thick] (12,-14)--(10.27,-13);
\draw[color=purple,thick] (6,-20)--(7.73,-19);
\draw[color=purple,thick] (6,-16)--(7.73,-17);
\draw[color=purple,thick] (4.27,-19)--(4.27,-17);

\draw[color=blue] (0,-22)--(6,-16);
\draw[color=blue] (1.73,-23)--(7.73,-17);
\draw[color=purple,thick] (0,-22)--(1.73,-23);
\end{tikzpicture}
\caption{${\mathcal Z}_{2,3/3,4/1,3}$}
\label{fig:Z13}
\end{figure}
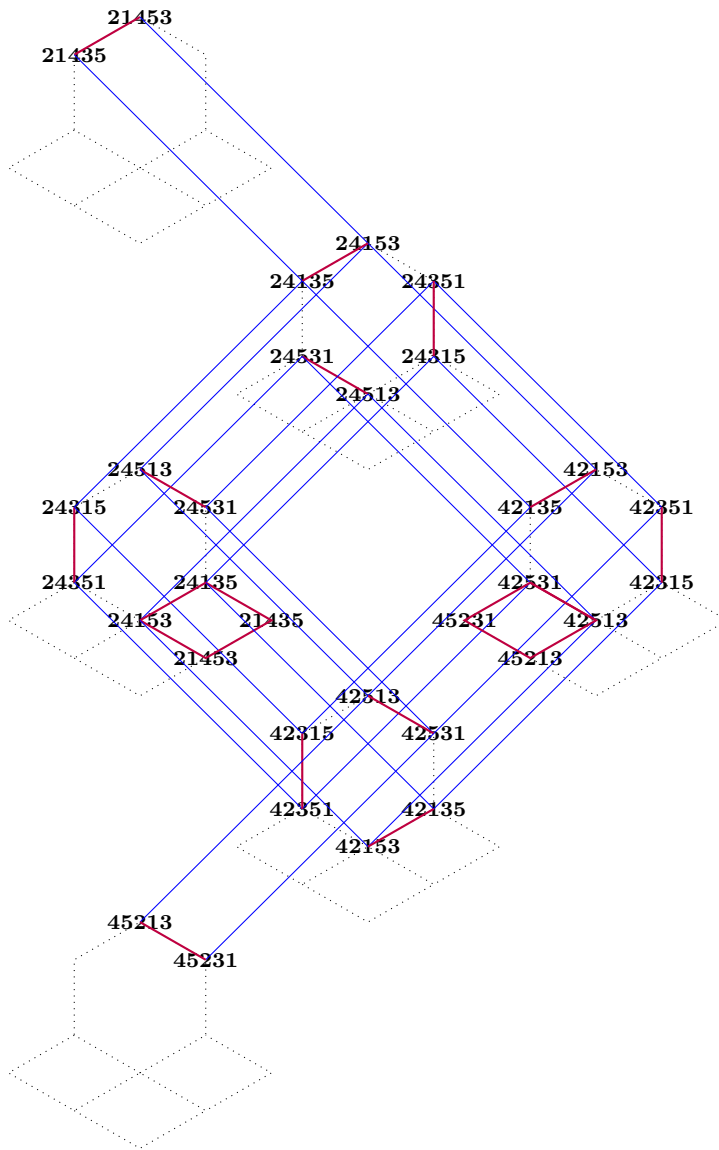

\section{Action of the fundamental group}\label{sec:lifting}
\subsection{Fundamental groups and coverings}
We recall a few basic facts about fundamental groups of graphs, see e.g. \cite{Sp} in the context of algebraic topology. For graphs of course everything can be stated in a purely combinatorial way.
For a finite connected graph $G=(V,E)$ a {\sl path} is a sequence of vertices $v_1-v_2-\ldots-v_l$ such that each $v_iv_{i+1}$ is an edge. A {\sl loop} is a path with $v_1=v_l$. The {\sl fundamental group} $\pi_1(G,v)$, based at some point $v\in V$, is the group of homotopy classes of loops, for the concatenation product. For graphs the homotopy equivalence relation between path is the transitive closure of the relation between two paths of the form
$\ldots -x-\ldots$ and $\ldots -x-y-x-\ldots$ obtained by inserting a loop $x-y-x$ into some path.
The fundamental group  is a free group with $|E|-|V|+1$ generators (in particular it is trivial if $G$ is a tree). For two base points $v,v'$ if $\mu=v-\ldots-v'$ is a path between $v$ and $v'$, with inverse $\mu^{-1}=v'-\ldots -v$, the concatenation map
$\lambda\mapsto \mu^{-1}\lambda\mu$ descends to an isomorphism 
$\pi_1(G,v)\to\pi_1(G,v')$ between the fundamental groups based at $v$ and $v'$.

\subsection{Action of the fundamental group on the fibers of a covering}
If $\Pi:V\to W$ is a covering map from graph  $G$ to $H$ then, for every path $w_1-w_2-\ldots -w_l$ in  $H$ and every $v_1\in\Pi^{-1}(w_1)\subset V$, there exists a unique path $v_1-v_2-\ldots -v_l$ in the graph $G$ which is mapped to the path $w_1-w_2-\ldots- w_l$ by $\Pi$. In particular for any vertex $w\in W$ the fundamental group  $\pi_1(H,w)$, acts by permutations on the fiber
$\Pi^{-1}(w)$: a loop $\lambda$ based at $w$ in $H$ lifts to a path starting at $v\in\Pi^{-1}(w)$ and ending at $[\lambda](v)\in\Pi^{-1}(w)$ so that $[\lambda]$ gives a permutation of the set  $\Pi^{-1}(w)$ and $\lambda\mapsto[\lambda]$ is a homomorphism from $\pi_1(H,w)$ to the group of  permutations of $\Pi^{-1}(w)$. For any two vertices $w,w'\in W$ a path from $w$ to $w'$ gives a bijection between $\Pi^{-1}(w)$ and  $\Pi^{-1}(w')$. One obtains thus a conjugation of the actions  of $\pi_1(H,w)$ and $\pi_1(H,w')$ on their respective fibers $\Pi^{-1}(w)$ and $\Pi^{-1}(w')$.

\subsection{Fundamental groups of recoil classes}
For a recoil class ${\mathcal Y}_K$, with the graph structure induced by the weak order,
the fundamental group is generated by  loops  coming from relations in the symmetric group, (which are  also boundaries of two-dimensional cells in the permutahedron).
These relations are
 
 \begin{eqnarray}
 s_i^2&=&e\label{rel1}\\
  s_is_j&=&s_js_i\qquad |i-j|\geq 2\label{rel2}\\
  s_is_{i+1}s_i&=&s_{i+1}s_is_{i+1}\label{rel3}
  \end{eqnarray}
  
 and the corresponding loops are of the form (provided of course that  the permutations involved belong to  the same recoil class)
$$ \begin{array}{lr}
 \sigma-\sigma s_i- \sigma& \quad relation\ (\ref{rel1})\\
 \sigma-\sigma s_i- \sigma s_is_j-\sigma s_j-\sigma&\quad relation\ (\ref{rel2})\\
 \sigma-\sigma s_i- \sigma s_is_{i+1}-\sigma s_is_{i+1}s_i-\sigma s_{i+1}s_i-\sigma s_{i+1}-\sigma&\quad relation\ (\ref{rel3})\\
 \sigma-\sigma s_{i+1}- \sigma s_{i+1}s_i-\sigma s_{i+1}s_is_{i+1}-\sigma s_is_{i+1}-\sigma s_i-\sigma&\quad relation\ (\ref{rel3})
 \end{array}
 $$
 The last two loops are inverse of each other.
 
 We have considered loops based at some $\sigma$ but of course, as recalled above,  one can change the base point using a path from that point to $\sigma$.

 \subsection{Lifting loops in recoil classes}Let ${\mathcal Y}_I, {\mathcal Y}_J, {\mathcal Y}_K$ be recoil classes with $a_{IJK}\geq 1$ and $\Pi:{\mathcal Z}_{IJK}\to {\mathcal Y}_K$, as in (\ref{Z}), the associated projection. 
 We will determine the possible lifts of generating loops of
 ${\mathcal Y}_K$ in this context and their action on the fibers.
 
 \subsubsection{ $s_i^2=e$}\

 A path $\sigma-\sigma s_i-\sigma s_is_i=\sigma$ in ${\mathcal Y}_K$   is homotopically trivial and acts by the identity on the fibers.
\subsubsection{$s_is_j=s_js_i$}\

  It is easy to see, from the proof of Proposition \ref{recoil}, that any loop in ${\mathcal Y}_K$ of the form 
   $$\sigma-\sigma s_i-\sigma s_is_j-\sigma s_i-\sigma\qquad |i-j|\geq 2$$   lifts to a loop 
   in ${\mathcal Y}_I\times {\mathcal Y}_J$, therefore the action of such a loop is trivial on the fiber.
\subsubsection{ $s_is_{i+1}s_i=s_{i+1}s_is_{i+1}$}\

As we shall see,  a loop associated with such a   relation acts by a permutation   of order 1 or 2 on the fibers.
Let $\sigma\in S_n$ then the vertices in the loop 
\begin{equation}\label{loop}
\sigma-\sigma s_i-\sigma s_is_{i+1}-\sigma s_is_{i+1}s_i-\sigma s_{i+1}s_i-\sigma s_{i+1}-\sigma
\end{equation} belong to the same recoil class as $\sigma$ if and only if  $\sigma_i,\sigma_{i+1},\sigma_{i+2}$ satisfy   
$$|\sigma_i-\sigma_{i+1}|\geq 2,\,|\sigma_{i+1}-\sigma_{i+2}|\geq 2, \,|\sigma_{i}-\sigma_{i+2}|\geq 2.$$  
In the following we assume that $\sigma$ and $i$ satisfy these inequalities.
  Let $\sigma=\pi\rho$ be a factorization with $(\pi,\rho)\in {\mathcal Y}_I\times {\mathcal Y}_J$, there are several cases:
 \begin{itemize}
 \item if $|\rho_i-\rho_{i+1}|\geq 2,\,|\rho_{i+1}-\rho_{i+2}|\geq 2, \,|\rho_{i}-\rho_{i+2}|\geq 2$ then the sequence  
 $$\rho-\rho s_i-\rho s_is_{i+1}-\rho s_is_{i+1}s_i-\rho s_{i+1}s_i-\rho s_{i+1}-\rho$$ is a loop in ${\mathcal Y}_J$ and the loop (\ref{loop}) lifts to the loop 
 $$(\pi,\rho)-(\pi,\rho s_i)-(\pi,\rho s_is_{i+1})-(\pi,\rho s_is_{i+1}s_i)-(\pi,\rho s_{i+1}s_i)-(\pi,\rho s_{i+1})-(\pi,\rho)$$
 in ${\mathcal Z}_{IJK}$. It follows that this loop has a trivial action.
 \item if exactly two of the values $\rho_{i},\rho_{i+1},\rho_{i+2}$ are adjacent then again (\ref{loop}) lifts to a loop in ${\mathcal Z}_{IJK}$. For example one can  check that for $\rho_{i}=j,\rho_{i+1}=j+1,\rho_{i+2}=k$ with $|j-k|,|j+1-k|\geq 2$, the loop
 (\ref{loop}) lifts to 
 $$(\pi,\rho)-(\pi s_j,\rho) -(\pi s_j,\rho s_{i+1})-(\pi s_j,\rho s_{i+1}s_i)-(\pi,\rho s_{i+1}s_i)-(\pi,\rho s_{i+1})-(\pi,\rho)$$ which is a loop in ${\mathcal Z}_{IJK}$ and the action is again trivial. The other cases are similar and left to the reader.
 \item if $(\rho_{i},\rho_{i+1},\rho_{i+2})=(j,j+1,j+2)\ \text{or}\ (j+2,j+1,j)$ then one can check that the loop  (\ref{loop}) lifts either to the loop
 $$(\pi,\rho)-(\pi s_j,\rho) -(\pi s_js_{j+1},\rho )-(\pi s_js_{j+1}s_j,\rho)
  -(\pi s_{j+1}s_j,\rho )-(\pi s_{j+1},\rho )-(\pi,\rho)$$ 
  or to its inverse. Again the action is trivial.
  \item finally if 
  \begin{equation}\label{eq:rhoj}
  (\rho_{i},\rho_{i+1},\rho_{i+2})=(j+1,j,j+2)\ \text{or}\ (j+1,j+2,j)\ \text{or}\ (j,j+2,j+1)\ \text{or}\ (j+2,j,j+1)\end{equation}
   then (\ref{loop}) lifts to a path which is not a loop, for example if $(\rho_{i},\rho_{i+1},\rho_{i+2})=(j+1,j,j+2)$ then, using that $\rho s_{i+1}\rho^{-1}=s_{j}s_{j+1}s_{j}$ one checks that the   lift is
  $$(\pi,\rho)-(\pi s_j,\rho) -(\pi s_j,\rho s_{i+1})-(\pi s_js_{j+1},\rho s_{i+1})
  -( \pi s_js_{j+1} ,\rho )-(\pi s_{j}s_{j+1}s_{j},\rho  )-(\pi s_{j}s_{j+1}s_{j},\rho s_{i+1} )$$ 
  which is not a loop but ends in another factorization of $\sigma$. Starting from this new factorization, the lift of the loop (\ref{loop}) gives now
  $$(\pi s_{j}s_{j+1}s_{j},\rho s_{i+1} )-(\pi s_{j+1}s_{j},\rho s_{i+1}  ) -
 (\pi s_{j+1}s_{j},\rho   )-(\pi s_j,\rho)
  -(\pi s_j,\rho s_{i+1})-(\pi ,\rho s_{i+1})-(\pi,\rho)$$
  which gets back to the original factorization. Thus we see that the action of the loop 
  (\ref{loop}) has order 2. One can check that the  other values in (\ref{eq:rhoj}) also yield an action of order 2.
\end{itemize}
 
Coming back to Figure \ref{fig:Z13} 
we extract from it  an explicit example of this action in 
Figure \ref{fig:braid} which shows the loop projecting on the loop 
$$24153-24135-24315-24351-24531-24513-24153$$
with multiplicity two.

\begin{figure}
\begin{tikzpicture}[scale=.5]

\node  at (0,-2){$\bf\scriptstyle 24513  $};
\node  at (0,2){$\bf\scriptstyle 24153 $};
\node  at (1.73,-1){$\bf\scriptstyle  24315 $};
\node  at (1.73,1){$\bf\scriptstyle  24351 $};
\node  at (-1.73,1){$\bf\scriptstyle  24135 $};
\node  at (-1.73,-1){$\bf\scriptstyle  24531 $};
\draw[dotted] (0,-2)--(1.73,-1)--(1.73,1)--(0,2)--(-1.73,1)--(-1.73,-1)--(0,-2);
\draw[color=purple,thick] (0,2)--(-1.73,1);
\draw[color=purple,thick] (1.73,-1)--(1.73,1);
\draw[color=purple,thick] (-1.73,-1)--(0,-2);
\begin{scope}[shift={(-6,-6)}]
\node  at (0,-2){$\bf\scriptstyle 24153  $};
\node  at (0,2){$\bf\scriptstyle 24513 $};
\node  at (1.73,-1){$\bf\scriptstyle  24135 $};
\node  at (1.73,1){$\bf\scriptstyle  24531 $};
\node  at (-1.73,1){$\bf\scriptstyle  24315 $};
\node  at (-1.73,-1){$\bf\scriptstyle  24351 $};
\draw[dotted] (0,-2)--(1.73,-1)--(1.73,1)--(0,2)--(-1.73,1)--(-1.73,-1)--(0,-2);
\draw[color=purple,thick] (0,2)--(1.73,1);
\draw[color=purple,thick] (-1.73,-1)--(-1.73,1);
\draw[color=purple,thick] (1.73,-1)--(0,-2);
\end{scope}

\draw[color=blue] (0,2)--(-6,-4);
\draw[color=blue] (1.73,1)--(-4.27,-5);
\draw[color=blue] (0,-2)--(-6,-8);
\draw[color=blue] (1.73,-1)--(-4.27,-7);
\draw[color=blue] (-1.73,1)--(-7.73,-5);
\draw[color=blue] (-1.73,-1)--(-7.73,-7);
\end{tikzpicture}
\caption{Lifting a braid relation}
\label{fig:braid}
\end{figure}
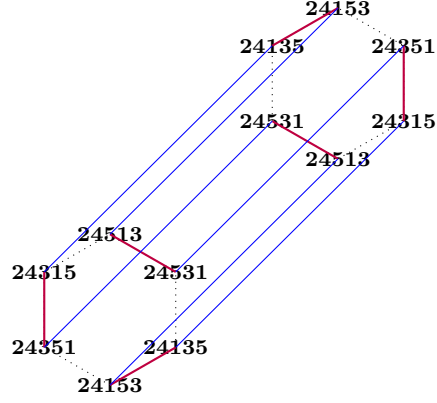

Note that for any $K$ such that the class ${\mathcal Y}_K$ has no loop of type (3) (e.g. the classes
${\mathcal Y}_1,\,{\mathcal Y}_2,\,{\mathcal Y}_3,$ or ${\mathcal Y}_{3,4}$ in the examples of section \ref{sec:examples}) the graph $\Pi^{-1}({\mathcal Y}_K)$ consists of $a_{IJK}$ connected components, all isomorphic to ${\mathcal Y}_K$.
\section{The case of Coxeter groups}
We refer to  \cite{Bou} for basic facts about Coxeter groups.
Let $(W,S)$ be a Coxeter system, where $W$ is a finite Coxeter group and $S$ a system of simple reflections. The length function with respect to $S$ and the right weak order are defined as in the case of symmetric groups. Recall the fundamental {\sl exchange property} (\cite{Bou}  Ch. IV, \textsection 1, Lemme 3).
\begin{lemma}\label{exchange}
Let $w\in W$ and $w=s_1s_2\ldots s_k$ be a reduced decomposition then if $s\in S$ and $\ell (sw)=\ell(w)-1$ there exists $i\in\{1,2,\ldots,k\}$ such that $sw=s_1s_2\ldots s_{i-1}s_{i+1}\ldots s_k$ (this is a reduced decomposition of $sw$).
\end{lemma}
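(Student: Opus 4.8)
The plan is to derive the exchange property from the \emph{geometric representation} of $(W,S)$ (the Tits representation), which is essentially unavoidable here since $W$ is presented only by its Coxeter relations. I would first set up the real vector space $V=\bigoplus_{s\in S}\mathbb{R}\alpha_s$ equipped with the symmetric bilinear form determined by $B(\alpha_s,\alpha_t)=-\cos(\pi/m_{st})$, and let $W$ act by $s\cdot v=v-2B(\alpha_s,v)\alpha_s$; one checks that the braid relations are respected and that the resulting representation is faithful. Write $\Phi=W\cdot\{\alpha_s:s\in S\}$ for the set of roots, $\Phi^{+}$ for the roots all of whose coordinates in the basis $(\alpha_s)_{s\in S}$ are $\ge 0$, and $\Phi^{-}=-\Phi^{+}$.

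The next step records the standard facts about this set-up, each obtained by the usual induction on the length function: (i) $\Phi=\Phi^{+}\sqcup\Phi^{-}$; (ii) for $w\in W$ and $s\in S$ one has $\ell(sw)=\ell(w)+1$ precisely when $w^{-1}(\alpha_s)\in\Phi^{+}$ and $\ell(sw)=\ell(w)-1$ precisely when $w^{-1}(\alpha_s)\in\Phi^{-}$; (iii) a simple reflection $s$ permutes $\Phi^{+}\setminus\{\alpha_s\}$ and sends $\alpha_s$ to $-\alpha_s$, so the only positive root it makes negative is $\alpha_s$ itself. I would also use that the reflection attached to a root $u(\alpha_t)$ is the conjugate $u\,t\,u^{-1}$, together with the fact that a reflection in $W$ is determined by its root line (this is where faithfulness enters).

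The core of the argument is then a wall-crossing computation. Write $w^{-1}=s_k s_{k-1}\cdots s_1$ and track $\alpha_s$ through the partial products: set $\delta_0=\alpha_s$ and $\delta_j=s_j s_{j-1}\cdots s_1(\alpha_s)$ for $1\le j\le k$, so that $\delta_k=w^{-1}(\alpha_s)$. By (ii) and the hypothesis $\ell(sw)=\ell(w)-1$ we have $\delta_0\in\Phi^{+}$ and $\delta_k\in\Phi^{-}$, so by (i) there is a least index $i$ with $\delta_i\in\Phi^{-}$; then $\delta_{i-1}\in\Phi^{+}$ while $\delta_i=s_i(\delta_{i-1})\in\Phi^{-}$, so (iii) forces $\delta_{i-1}=\alpha_{s_i}$, that is $\alpha_s=s_1 s_2\cdots s_{i-1}(\alpha_{s_i})$. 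Passing to reflections gives $s=(s_1\cdots s_{i-1})\,s_i\,(s_1\cdots s_{i-1})^{-1}$ in $W$, hence $s\,(s_1\cdots s_{i-1})=(s_1\cdots s_{i-1})\,s_i$ and therefore $s\,s_1\cdots s_{i-1}s_i=s_1\cdots s_{i-1}$; multiplying on the right by $s_{i+1}\cdots s_k$ yields $sw=s_1\cdots s_{i-1}s_{i+1}\cdots s_k$, a word of length $k-1=\ell(sw)$, which is therefore reduced.

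The final computation is short; the real work, and the part the paper sidesteps by citing Bourbaki, is the groundwork: verifying that the Tits representation is well defined and faithful, and establishing (i)--(iii). The delicate point is the intertwined induction that simultaneously proves the partition $\Phi=\Phi^{+}\sqcup\Phi^{-}$ and the length criterion (ii) --- in the inductive step one writes a length-$\ell(w)$ element as $s\,w'$ with $\ell(w')=\ell(w)-1$, applies the hypothesis to $w'$, and must control how $s$ acts on the roots already made negative by $(w')^{-1}$; keeping that bookkeeping straight is the only genuine obstacle.
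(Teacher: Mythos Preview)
The paper does not prove this lemma at all; it simply recalls it as ``the fundamental exchange property'' and cites Bourbaki (Ch.~IV, \S1, Lemme~3). Your argument via the geometric (Tits) representation is correct and is one of the standard routes: the wall-crossing step you describe is exactly the usual one, and your remark that faithfulness of the representation is what lets you lift the equality of reflections on $V$ back to an equality in $W$ is on point.

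It may be worth noting, since you explicitly invoke the citation, that Bourbaki's own proof in Chapitre~IV is in fact purely combinatorial---it works with abstract reflections $T=\{wsw^{-1}:w\in W,\ s\in S\}$ and a signed-permutation action on $T\times\{\pm1\}$, while the geometric representation only enters in Chapitre~V. So your approach, while entirely valid and arguably more transparent geometrically, is not the argument the paper's reference points to. Either way, because the paper treats this lemma as an imported black box, any correct proof suffices here, and nothing in the rest of the paper depends on \emph{how} the exchange property is established.
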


 For $w\in W$ its recoil set is the set of all $s\in S$ such that $\ell(sw)=\ell(w)-1$,   denoted $R(w)$ (again, descents of $w$ are recoils of $w^{-1}$). For $I\subset S$ we denote again by ${\mathcal Y}_I$ the set of $w\in W$ whose recoil set is $I$ and $Y_I=\sum_{w\in {\mathcal Y}_I}w$ is the sum in the group algebra of $W$.
The analog of Proposition \ref{recoilclass} holds unchanged while for Lemma \ref{>2} the anologous result will follow from the following slightly more precise result. 

\begin{lemma} \label{Coxrecoilclass}
Let $w\in W$ and $s\in S$ be such that $\ell(ws)=\ell(w)+1$ then 
\begin{enumerate}
\item
$R(ws)=R(w)$  if  $wsw^{-1}\notin S$ 
\item
 $R(ws)=R(w)\cup\{wsw^{-1}\}\ne R(w)$  if $wsw^{-1}\in S$.
 \end{enumerate}
\end{lemma}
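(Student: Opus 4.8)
The plan is to mimic, in the Coxeter-group setting, the analysis of how a recoil set changes under right multiplication by a simple reflection, using the exchange property in place of the explicit combinatorics of one-line notation. Fix $w\in W$ and $s\in S$ with $\ell(ws)=\ell(w)+1$, and let $t\in S$; the task is to compare $\ell(tw)$ with $\ell(w)$ and $\ell(tws)$ with $\ell(ws)$, i.e. to decide membership of $t$ in $R(w)$ versus $R(ws)$.

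First I would record the elementary fact that right multiplication by $s$ changes each length by exactly $\pm 1$, so $\ell(tws)\in\{\ell(tw)-1,\ell(tw)+1\}$; combined with $\ell(ws)=\ell(w)+1$ this already forces: if $t\in R(w)$ then $\ell(tw)=\ell(w)-1$, hence $\ell(tws)\le\ell(tw)+1=\ell(w)=\ell(ws)-1$, so $\ell(tws)=\ell(ws)-1$ and $t\in R(ws)$; thus $R(w)\subseteq R(ws)$ always. So the only question is which $t\notin R(w)$ can enter $R(ws)$, i.e. for which $t$ with $\ell(tw)=\ell(w)+1$ does one have $\ell(tws)=\ell(ws)-1=\ell(w)$.

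Now suppose $t\notin R(w)$ but $t\in R(ws)$, so $\ell(tws)=\ell(w)$ while $\ell(tw)=\ell(w)+1$ and $\ell(ws)=\ell(w)+1$. Pick a reduced word $w=s_1\cdots s_k$; then $ws=s_1\cdots s_k s$ is reduced, and since $\ell(t\cdot ws)=\ell(ws)-1$ the exchange property (Lemma \ref{exchange}) gives $tws=s_1\cdots \widehat{s_i}\cdots s_k s$ for some $i\le k$, or $tws=s_1\cdots s_k=w$ (the case where the deleted letter is the final $s$). In the latter case $tw=ws^{-1}=ws$, contradicting $\ell(tw)=\ell(w)+1\ne\ell(w)-1=\ell(tw)$ — wait, $\ell(tw)=\ell(ws)=\ell(w)+1$ there, but also $tw=ws$ means $t=wsw^{-1}$; I would check this is consistent only when... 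Actually the cleaner route: in the first case, right-multiply by $s$ to get $tw=s_1\cdots\widehat{s_i}\cdots s_k$, so $\ell(tw)\le k-1=\ell(w)-1$, contradicting $t\notin R(w)$. Hence the deleted letter must be the last one, giving $tw=ws$, i.e. $t=wsw^{-1}$, and in particular $wsw^{-1}\in S$. Conversely, if $wsw^{-1}\in S$, set $t=wsw^{-1}$; then $tw=ws$ so $\ell(tw)=\ell(w)+1$, giving $t\notin R(w)$, while $tws=wss=w$ so $\ell(tws)=\ell(w)=\ell(ws)-1$, giving $t\in R(ws)$. This shows $R(ws)\setminus R(w)=\{wsw^{-1}\}$ when $wsw^{-1}\in S$ and is empty otherwise, and since $t=wsw^{-1}\notin R(w)$ in that case, the union $R(w)\cup\{wsw^{-1}\}$ is indeed strictly larger than $R(w)$; this is exactly the dichotomy (1)–(2).

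The main obstacle is the careful bookkeeping in the exchange-property step: one must handle the possibility that the exchanged letter is the appended $s$ (not one of the $s_1,\dots,s_k$) separately and show it leads to the stated conclusion rather than a contradiction, and one must be sure that the reduced word $s_1\cdots s_k s$ really is reduced (which follows from $\ell(ws)=\ell(w)+1$) so that the exchange condition applies with the correct length count. Everything else is a short deduction from the $\pm1$ behaviour of $\ell(\,\cdot\,s)$. Once Lemma \ref{Coxrecoilclass} is in hand, the analog of Lemma \ref{>2} is the special case $w\leftrightarrow \sigma$, $s\leftrightarrow s_i$ read in both directions, and the proof of Proposition \ref{recoil} carries over verbatim with $wsw^{-1}$ playing the role of the transposition $(\rho_i\,\rho_{i+1})$.
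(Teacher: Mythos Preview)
Your proof is correct and follows essentially the same route as the paper's: establish $R(w)\subseteq R(ws)$, then for $t\in R(ws)$ apply the exchange property to the reduced word $s_1\cdots s_k s$ and separate the two cases according to whether the deleted letter lies among $s_1,\dots,s_k$ (forcing $t\in R(w)$) or is the final $s$ (forcing $t=wsw^{-1}\in S$). The only cosmetic differences are that you obtain the inclusion $R(w)\subseteq R(ws)$ via length arithmetic rather than by exhibiting a reduced word beginning with $t$, and you spell out the converse direction (if $wsw^{-1}\in S$ then it lies in $R(ws)\setminus R(w)$) more explicitly; you might clean up the abandoned ``wait\dots'' digression before submitting.
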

\begin{proof}
 If $w=s_1s_2\ldots s_k$ is a reduced decomposition of $w$ then 
$ws=s_1s_2\ldots s_ks$ is a reduced decomposition of $ws$. 
Suppose that $s'\in R(w)$ then there exists a reduced decomposition of $w$ starting with $s'$ therefore also for $ws$ thus $s'$ is a recoil of $ws$. It follows  that  $R(w)\subset R(ws)$.
Conversely let  $s'$ be a recoil of $ws$ and $ws=s_1s_2\ldots s_ks$ a reduced decomposition, 
by the exchange property, Lemma \ref{exchange},  $s'ws$ has a reduced decomposition obtained by deleting a term in $s_1\ldots s_ks$. If the deleted term is $s_i$ with $1\leq i\leq k$ then $s'w=s_1s_2\ldots s_{i-1}s_{i+1}\ldots s_k$ therefore $s'\in R(w)$.
If the deleted term is the last one,  then $s'ws=w$ and $s'=wsw^{-1}$. In this case $s'\in R(ws)$ but $s'\notin R(w)$ since $s'w=ws$ has length $\ell(w)+1$ thus we have $R(ws)=R(w)\cup\{s'\}\ne R(w)$. This happens if and only if $wsw^{-1}\in S$. 
 The Lemma follows.

\end{proof}
\begin{cor} \label{Cor}
Let $w\in W$ and $s\in S$ then   $R(ws)=R(w)$  if and only if $wsw^{-1}\notin S$.
\end{cor}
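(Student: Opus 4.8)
The plan is to split Corollary \ref{Cor} into two implications, one of which we already have and one of which needs a short argument. If $\ell(ws)=\ell(w)+1$, then both directions are precisely Lemma \ref{Coxrecoilclass}: case (1) gives that $wsw^{-1}\notin S$ implies $R(ws)=R(w)$, and case (2) gives the contrapositive of the converse, namely $wsw^{-1}\in S$ implies $R(ws)\ne R(w)$. So the only work is to reduce the general case to this one.

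The remaining case is $\ell(ws)=\ell(w)-1$, i.e. $s\in R(w)$. First I would observe that in this case we can apply the already-settled case to the pair $(ws, s)$: setting $w'=ws$ we have $\ell(w's)=\ell(w)=\ell(w')+1$, so Lemma \ref{Coxrecoilclass} applies and tells us that $R(w's)=R(w')$ iff $w'sw'^{-1}\notin S$. Now $w's=w$, and $w'sw'^{-1}=(ws)s(ws)^{-1}=wsw^{-1}$ (the two $s$'s in the middle cancel, since $s^2=e$ and conjugation by $ws$ is conjugation by $w$ composed with conjugation by $s$, but more simply $wsw^{-1}=ws\cdot s\cdot sw^{-1}=w s w^{-1}$ — in fact $(ws)s(s^{-1}w^{-1})=wsw^{-1}$ directly). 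Hence $R(w)=R(ws)$ iff $wsw^{-1}\notin S$, which is exactly the claim for the pair $(w,s)$.

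Putting the two cases together: for any $w\in W$ and $s\in S$, exactly one of $\ell(ws)=\ell(w)+1$ or $\ell(ws)=\ell(w)-1$ holds, and in either case we have shown $R(ws)=R(w)\iff wsw^{-1}\notin S$. The main (very mild) obstacle is just being careful with the bookkeeping in the length-decreasing case — making sure that applying Lemma \ref{Coxrecoilclass} to $(ws,s)$ is legitimate and that the conjugacy condition $w'sw'^{-1}\notin S$ really coincides with $wsw^{-1}\notin S$; once that is checked the statement is immediate. No deeper input than Lemma \ref{Coxrecoilclass} and the relation $s^2=e$ is needed.
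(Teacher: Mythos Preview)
Your argument is correct and is exactly the intended one: the paper states the corollary with no proof, treating it as immediate from Lemma~\ref{Coxrecoilclass}, and your reduction of the length-decreasing case to the length-increasing case via $w'=ws$ (using $(ws)s(ws)^{-1}=wsw^{-1}$) is precisely the symmetry that makes it a corollary. The only cosmetic remark is that your parenthetical computation of $(ws)s(ws)^{-1}$ is a bit muddled; the cleanest way is to note that conjugation by $s$ fixes $s$, so conjugation by $ws$ sends $s$ to $wsw^{-1}$.
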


Finally the analog of Proposition \ref{recoil} is the following. The proof follows closely the case of the symmetric group, using the above Corollary instead of Lemma \ref{>2} and is left to the reader.

\begin{proposition}\label{Coxrecoil}
Let $w,u,v\in W$  and $s\in S$ be such that $$w=uv\quad \text{and}\quad R(ws)=R(w)$$
 then there exists a unique factorization 
$ws=u'v'$ where either : 
\begin{enumerate}
\item
$u=u'$ while  $v^{-1}v'\in S$ and $R(v)=R(v')$ or
\item
$v=v'$ while $u^{-1}u'\in S$ and $R(u)=R(u')$. 
\end{enumerate}
\end{proposition}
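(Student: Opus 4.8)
The plan is to mirror the proof of Proposition \ref{recoil} verbatim, substituting the group-theoretic criterion of Corollary \ref{Cor} for the numerical criterion of Lemma \ref{>2}. Starting from $w=uv$ with $R(ws)=R(w)$, there are exactly two ways to push the extra factor $s$ through the product: either absorb it on the right, writing $ws=u(vs)$, or conjugate it across, writing $ws=(uvsv^{-1})v=\bigl(u(vsv^{-1})\bigr)v$. These are the only factorizations $ws=u'v'$ with $u=u'$ or $v=v'$, since fixing one factor determines the other. Note that because $\ell(ws)=\ell(w)+1$ (which holds since $R(ws)=R(w)$ forces $s\notin R(w)$), the relevant hypotheses of Corollary \ref{Cor} are in force.

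First I would treat the case $vsv^{-1}\notin S$. Then by Corollary \ref{Cor} we have $R(vs)=R(v)$, so the factorization $ws=u(vs)$ satisfies conclusion (1) with $u'=u$, $v'=vs$, and $v^{-1}v'=s\in S$. For the other factorization $ws=u'v$ one computes $u'=u(vsv^{-1})$, and since $vsv^{-1}$ is a reflection that is not a simple reflection we get $u^{-1}u'=vsv^{-1}\notin S$, so conclusion (2) fails; this also shows (1) holds but (2) does not, giving uniqueness in this case. Second, suppose $vsv^{-1}\in S$, say $vsv^{-1}=s'\in S$. Then $R(vs)\ne R(v)$ by Corollary \ref{Cor}, so the right-absorption factorization fails conclusion (1). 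For the conjugated factorization $ws=u'v$ we have $u'=us'$, so $u^{-1}u'=s'\in S$; it remains to check $R(u)=R(us')$, equivalently, by Corollary \ref{Cor}, that $us'u^{-1}\notin S$. But $us'u^{-1}=u(vsv^{-1})u^{-1}=(uv)s(uv)^{-1}=wsw^{-1}$, and since $R(ws)=R(w)$ the Corollary applied to $w$ and $s$ gives exactly $wsw^{-1}\notin S$. Hence conclusion (2) holds, and (1) fails, so again the factorization is unique.

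Combining the two cases, exactly one of (1), (2) holds and the corresponding factorization is the unique one of the stated form, which proves Proposition \ref{Coxrecoil}. The only real subtlety — and it is minor — is the bookkeeping in the second case: one must recognize that $us'u^{-1}$ is precisely $wsw^{-1}$ so that the hypothesis $R(ws)=R(w)$ can be fed back through Corollary \ref{Cor} to control $R(us')$. Everything else is a direct transcription of the symmetric-group argument, with ``$\ell(s_i\,\cdot\,)$'' replaced by ``conjugation landing in $S$ or not'' and the transposition $(\rho_i\,\rho_{i+1})$ replaced by the reflection $vsv^{-1}$. Since the author explicitly leaves this proof to the reader, I expect no genuine obstacle; the exchange property is not even needed directly here, only through its consequence Corollary \ref{Cor}.
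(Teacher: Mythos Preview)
Your proof is correct and follows exactly the route the paper indicates: it transcribes the argument of Proposition~\ref{recoil} word for word, replacing the criterion $|\rho_i-\rho_{i+1}|\ge 2$ by the criterion $vsv^{-1}\notin S$ from Corollary~\ref{Cor}, and the key identity $us'u^{-1}=wsw^{-1}$ is precisely the Coxeter analogue of $|\pi_j-\pi_{j+1}|=|\sigma_i-\sigma_{i+1}|$. One small remark: your parenthetical claim that $R(ws)=R(w)$ forces $\ell(ws)=\ell(w)+1$ via ``$s\notin R(w)$'' confuses left and right descents and is in any case unnecessary, since Corollary~\ref{Cor} carries no length hypothesis; you can simply delete that sentence.
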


We can now define the sets ${\mathcal Z}_{IJK}$ exactly as in (\ref{Z}) and show that the product maps
$${\mathcal Z}_{IJK}\to {\mathcal Y}_{K}$$ as in (\ref{ZY}), are covering graphs. The rest of the argument to prove the existence of the recoil algebra is the same.

\begin{rem} The proofs of Lemma  \ref{Coxrecoilclass} and  \ref{Coxrecoil} use only the exchange property and not the fact that $W$ is a finite group, therefore they hold also in infinite Coxeter groups. However for infinite Coxeter groups the recoil (or descent) algebra in general is not well defined because of infinite multiplicities: some $a_{IJK}$ may be infinite.
\end{rem}
\subsection{A final example in a dihedral group}
We consider a finite dihedral group with simple generators $s,t$ satisfying the defining relations $s^2=t^2=(st)^n$ for some $n\geq 2$. There are four recoils classes namely ${\mathcal Y}_{\emptyset}=e$, ${\mathcal Y}_{s}=\{s,st,sts,\ldots\}$
${\mathcal Y}_{t}=\{t,ts,tst,\ldots\}$ with $|{\mathcal Y}_{s}|=|{\mathcal Y}_{t}|=n-1$ and ${\mathcal Y}_{s,t}=\{w_0\}$ where $w_0$  is the longest element: 
$$
\begin{array}{rcl}
w_0&=&(st)^{n/2}=(ts)^{n/2}\quad\text{ for $n$ even} \\ w_0&=&t(st)^{(n-1)/2}=s(ts)^{(n-1)/2}\quad\text{ for $n$  odd.}
\end{array}
$$
We show the graph corresponding to the product $Y_{s}Y_{t}$  for $n=6$ so that $w_0=ststst=tststs$.

\medskip

\begin{center}
\begin{tikzpicture}[scale=1]

\node  at (0,0){$\scriptstyle ststs$};
\node  at (0,1){$\scriptstyle stst$};
\node  at (0,2){$\scriptstyle sts$};
\node  at (0,3){$\scriptstyle st$};
\node  at (0,4){$\scriptstyle s$};
\node  at (0,4.6){${\mathcal Y}_s$};

\node  at (1.5,5){$\scriptstyle t$};
\node  at (3,5){$\scriptstyle ts$};
\node  at (4.5,5){$\scriptstyle tst$};
\node  at (6,5){$\scriptstyle tsts$};
\node  at (7.5,5){$\scriptstyle tstst$};
\node  at (0.6,5.2){${\mathcal Y}_t$};

\node  at (1.5,0){$\scriptstyle w_0$};
\node  at (1.5,1){$\scriptstyle sts$};
\node  at (1.5,2){$\scriptstyle stst$};
\node  at (1.5,3){$\scriptstyle s$};
\node  at (1.5,4){$\scriptstyle st$};
\node  at (3,0){$\scriptstyle tstst$};
\node  at (3,1){$\scriptstyle st$};
\node  at (3,2){$\scriptstyle ststs$};
\node  at (3,3){$\scriptstyle e$};
\node  at (3,4){$\scriptstyle sts$};
\node  at (4.5,0){$\scriptstyle tsts$};
\node  at (4.5,1){$\scriptstyle s$};
\node  at (4.5,2){$\scriptstyle w_0$};
\node  at (4.5,3){$\scriptstyle t$};
\node  at (4.5,4){$\scriptstyle stst$};
\node  at (6,0){$\scriptstyle tst$};
\node  at (6,1){$\scriptstyle e$};
\node  at (6,2){$\scriptstyle tstst$};
\node  at (6,3){$\scriptstyle ts$};
\node  at (6,4){$\scriptstyle ststs$};
\node  at (7.5,0){$\scriptstyle ts$};
\node  at (7.5,1){$\scriptstyle t$};
\node  at (7.5,2){$\scriptstyle tsts$};
\node  at (7.5,3){$\scriptstyle tst$};
\node  at (7.5,4){$\scriptstyle w_0$};

\draw[color=red,thick] (1.5,3.2)--(1.5,3.8);\draw[color=red,thick] (7.5,2.2)--(7.5,2.8);
\draw[color=red,thick] (1.5,1.2)--(1.5,1.8);\draw[color=red,thick] (7.5,.2)--(7.5,.8);

\draw[color=red,thick] (0,.2)--(0,.81);
\draw[color=red,thick] (0,1.2)--(0,1.81);
\draw[color=red,thick] (0,2.2)--(0,2.81);
\draw[color=red,thick] (0,3.2)--(0,3.81);

\draw[color=blue,thick] (1.7,4)--(2.7,4);\draw[color=blue,thick] (3.3,4)--(4.1,4);\draw[color=blue,thick] (4.8,4)--(5.6,4);
\draw[color=blue,thick] (4.6,3)--(5.7,3);\draw[color=blue,thick] (6.3,3)--(7.1,3);
\draw[color=blue,thick] (1.8,2)--(2.6,2);\draw[color=blue,thick] (6.4,2)--(7.2,2);
\draw[color=blue,thick] (1.8,1)--(2.6,1);\draw[color=blue,thick] (3.2,1)--(4.3,1);
\draw[color=blue,thick] (3.4,0)--(4.1,0);\draw[color=blue,thick] (4.9,0)--(5.7,0);\draw[color=blue,thick] (6.3,0)--(7.2,0);

\draw[color=blue,thick] (1.7,5)--(2.7,5);
\draw[color=blue,thick] (3.3,5)--(4.2,5);
\draw[color=blue,thick] (4.8,5)--(5.6,5);
\draw[color=blue,thick] (6.4,5)--(7.1,5);
\end{tikzpicture}
\end{center}

One has
$$Y_sY_t=2Y_\emptyset+3Y_{s,t}+2Y_s+2Y_t$$

\end{document}